\newtheorem{theorem}{Theorem}[section]
\newtheorem{lemma}{Lemma}[section]
\theoremstyle{definition}
\theoremstyle{remark}
\numberwithin{equation}{section}
 \title{On the construction of fully interpreted formal languages which posses their truth predicates} 
\author {S. Heikkil\"a\\
Department of Mathematical Sciences, University of Oulu\\
BOX 3000, FIN-90014, Oulu, Finland\\
E-mail: sheikki@cc.oulu.fi}
\begin{document}
\maketitle 
Shorttitle: Languages which posses their truth predicates

\noindent
\begin{abstract} 
\noindent
We shall first construct by ordinary recursion method subsets to the set $D$ of G\"odel numbers of the sentences of a language $\mathcal L$. That language is formed by the sentences of a fully interpreted formal language $L$, called an MA language, and sentences containing a monadic predicate letter $T$. From the class of the constructed subsets of $D$ we extract one set $U$ by transfinite recursion method.
 Interpret those sentences whose G\"odel numbers are in $U$ as true, and their negations as false. These sentences together  form an
MA language. It is a sublanguage of $\mathcal L$ having $L$ as its sublanguage, and $T$ is its truth predicate.
 
\vskip12pt

\noindent {{\bf MSC:} 00A30, 03B10, 47H04, 47H10, 68Q45}
\vskip12pt

\noindent{\bf Keywords:} language, formal, fully interpreted, sentence, G\"odel number, truth predicate, fixed point.

\end{abstract}
\newpage

\baselineskip 15pt

\section {Introduction}\label{S1} 

In  \cite{Hei14} a theory of truth is defined for certain sublanguages of a language $\mathcal L$ which is the first order language $L=\{\in\}$ of set theory augmented by a monadic predicate $T$.  
The interpretation of $L$ is determined by the  minimal model $M$ constructed in \cite{[4]} for ZF set theory.
This interpretation makes $L$ fully interpreted, i.e., its sentences are either true or false.
The sublanguages for which a theory of truth is defined belong to a class of sublanguages  of $\mathcal L$. Languages of that class are denoted by $\mathcal L_U$, where $U$
is a subset of the set $D$ of the G\"odel numbers of sentences of $\mathcal L$.  The  G\"odel numbers of sentences of $\mathcal L_U$ belong to the set $G(U)\cup F(U)$, where the subsets  $G(U)$ and $F(U)$ of $D$ are told to satisfy the following rules ('iff' abbreviates 'if and only if'):  
 \begin{enumerate}
 \item[(r1)] If $A$ is a sentence of $L$, then the G\"odel number \#$A$ of $A$ is in  $G(U)$ iff  $A$ is true in the interpretation of $L$, and  in  $F(U)$ iff $A$ is false in the interpretation of $L$.  
 \item[(r2)] Let $\mathbf n$ be a numeral. \#$T(\mathbf n)$ is in $G(U)$ iff $\mathbf n$ is the numeral $\left\lceil A\right\rceil$ of the G\"odel number of a sentence $A$ of $\mathcal L$ and  \#$A$  is in $U$. \#$T(\mathbf n)$ is in $F(U)$ iff $\mathbf n=\left\lceil A\right\rceil$, where $A$ is a sentence of $\mathcal L$ and \#[$\neg A$] is in $U$. 
\end{enumerate}  

In the next rules (r3)--(r7) $A$ and $B$ denote sentences of $\mathcal L$. 
\begin{enumerate}
 \item[(r3)] Negation rule: \#[$\neg A$] is in $G(U)$ iff \#$A$ is in $F(U)$, and in $F(U)$ iff \#$A$ is in $G(U)$. 
 \item[(r4)] Disjunction rule: \#[$A\vee B$]
 is  in $G(U)$ iff  \#$A$ or \#$B$ is in $G(U)$, and in $F(U)$ iff \#$A$ and \#$B$
are in $F(U)$. 
 \item[(r5)] Conjunction rule: \#[$A\wedge B$]
is in $G(U)$ iff 
both \#$A$ and \#$B$ are in $G(U)$. \#[$A\wedge B$] is in $F(U)$ iff 
\#$A$ or \#$B$ is in $F(U)$.
\item[(r6)] Implication rule: \#[$A\rightarrow B$] is in $G(U)$ iff 
\#$A$ is in  $F(U)$ or \#$B$ is in $G(U)$.  \#[$A\rightarrow B$] is  in $F(U)$ iff 
\#$A$ is in $G(U)$ and \#$B$ is in $F(U)$.
\item[(r7)] Biconditionality rule:
\#[$A \leftrightarrow B$] is in $G(U)$ iff \#$A$ and \#$B$ are both in $G(U)$ or both in $F(U)$, and  in $F(U)$ iff  \#$A$ is in $G(U)$ and 
\#$B$ is in $F(U)$ or \#$A$ is in $F(U)$ and \#$B$ is in $G(U)$.
\end{enumerate}
Assuming that the set $X$ of numerals of G\"odel numbers of sentences of $\mathcal L$ is the intended domain of discourse for $T$, the following rules are presented for $\exists xT(x)$ and $\forall xT(x)$: 
\begin{itemize}
\item[(r8)] \#[$\exists xT(x)$] is in $G(U)$ iff \#$T(\mathbf n)$ is in $G(U)$ for some $\mathbf n\in X$, and \#[$\exists xT(x)$] is in $F(U)$ iff  \#$T(\mathbf n)$ is in $F(U)$ for every $\mathbf n\in X$.
\item[(r9)] \#[$\forall xT(x)$] is in $G(U)$ iff \#$T(\mathbf n)$ is in $G(U)$ for every  $\mathbf n\in X$, and \#[$\forall xT(x)$] is in $F(U)$ iff \#$T(\mathbf n)$ is in $F(U)$ at least for one $\mathbf n\in X$.
\end{itemize}

In \cite{Hei15} the above considerations are extended to the case when the language $L$ is assumed to be mathematically agreeable (shortly MA). By  Chomsky's definition (cf. \cite{[C]}) a ``language is a set (finite or infinite) of sentences  of finite length, and constructed out of finite sets of symbols". Allowing also countable sets of symbols, we say that $L$ is 
an MA language if it satisfies the following conditions.
\smallskip

(i) The syntax of  $L$ contains a countable syntax of the first-order predicate logic with equality 
(cf., e.g.,  \cite[II.5]{[Ku]}), 
natural numbers in variables and their names, numerals in terms.
 
(ii) $L$ is fully interpreted.  

(iii) Classical truth tables (cf. e.g., \cite[p.3]{[Ku]}) are valid for the logical connectives 
$\neg$, $\vee$, $\wedge$, $\rightarrow$ and $\leftrightarrow$ of sentences of $L$.

(iv) Classical rules of truth hold for $\forall xP(x)$ and $\exists xP(x)$ where $P$ is a predicate of $L$.
\smallskip

 Any countable first-order formal language, equipped with a consistent theory interpreted by a countable model, and containing natural numbers and numerals, is an MA language. A classical example is the language of arithmetic  with its standard model and interpretation.

Basic ingredients in the  approach of \cite{Hei15} are: 

1. An MA language $L$ (base language). 

2. A monadic predicate letter $T$.

3. The language $\mathcal L$, which has  sentences of $L$, $T(\mathbf n)$, where $\mathbf n$ is a numeral, $\forall xT(x)$ and $\exists xT(x)$ as its basic sentences, and which is closed under  logical connectives $\neg$, $\vee$, $\wedge$, $\rightarrow$ and $\leftrightarrow$.

4. The set $D$ of G\"odel numbers of sentences of $\mathcal L$ in its fixed G\"odel numbering. 
\smallskip

Neither in \cite{Hei14} nor in \cite{Hei15} the sets $G(U)$ and $F(U)$  satisfying rules (r1)--(r9) are shown to exist. Our main task is to construct sets  $G(U)$ and $F(U)$, and prove that they satisfy rules (r1)--(r9), and the following rules when \#[$\exists x\neg T(x)$] and  \#[$\forall x\neg T(x)$] are added to basic sentences of $\mathcal L$.
\begin{itemize}
\item[(r10)] \#[$\exists x\neg T(x)$] is in $G(U)$ iff \#$T(\mathbf n)$ is in $F(U)$ for some $\mathbf n\in X$, and \#[$\exists x\neg T(x)$] is in $F(U)$ iff  \#$T(\mathbf n)$ is in $G(U)$ for every $\mathbf n\in X$.
\item[(r11)] \#[$\forall x\neg T(x)$] is in $G(U)$ iff \#$T(\mathbf n)$ is in $F(U)$ for every  $\mathbf n\in X$, and \#[$\forall x\neg T(x)$] is in $F(U)$ iff \#$T(\mathbf n)$ is in $G(U)$ at least for one $\mathbf n\in X$.
\end{itemize}

Because of the recursive construction of sets $G(U)$ we revise proofs given in \cite{Hei14, Hei15} for  properties of sets $G(U)$ and $F(U)$. 
Some of them are used in \cite[Theorem 4.1]{Hei14} to prove by transfinite recursion method the existence of consistent fixed points $U$ of $G$, i.e., subsets $U$ of $D$ which satisfy  $U=G(U)$, and  for no sentence $A$ of $\mathcal L$ the G\"odel numbers of both $A$ and $\neg A$ are in $U$. Among them there is the smallest one which is contained in every consistent fixed point of $G$.
\smallskip

To the smallest consistent fixed point $U$ of $G$ there corresponds a sublanguage $\mathcal L_0$ of $\mathcal L$ which has $G(U)\cup F(U)$ as the set of G\"odel numbers of its sentences.
As in \cite{Hei14,Hei15}, define an interpretation  for sentences of $\mathcal L_0$ as follows.
A sentence $A$ of $\mathcal L_0$ is interpreted as true iff its G\"odel number \#$A$ is in $G(U)$, and as false iff \#$A$ is in $F(U)$. 
The so defined theory of truth for $\mathcal L_0$ is shown in \cite[Theorem 3.1]{Hei14} to conform well with
the  eight norms presented for theories of truth in \cite{Lei07}. $T$ is called a truth predicate for $\mathcal L_0$, because 
$T$-biconditionality: 
$A\leftrightarrow T(\left\lceil A\right\rceil)$, is shown in  \cite[Lemma 4.1]{Hei15} to be true in that interpretation for all sentences $A$ of $\mathcal L_0$ ($T(\left\lceil A\right\rceil)$ stands for '$A$ is true'). 
 $\mathcal L_0$ is fully interpreted because $G(U]$ and $F(U)$ are separated by Lemma \ref{L01}. 

$L$ is by rule (r1) a sublanguage of $\mathcal L_0$. Moreover, 
a sentence $A$ of $L$ is by \cite[Lemma 4.2]{Hei15} true, respectively false, in the interpretation of $L$ if and only if $A$ 
is true, respectively false, in the interpretation of $\mathcal L_0$.  

$\mathcal L_0$ is an MA language. By above results it satisfies properties (i) and (ii).  
Rules (r1)--(r11) and the above defined interpretation imply that properties (iii) and
(iv) are valid by assuming that  $T$ and $\neg T$ are the only predicates of $\mathcal L_0$ which are not predicates of $L$.

 
Main tools used in proofs are ZF set theory and classical logic.  
 
 
\section{Recursive construction of sets $G(U)$}\label{S2} 

Let  $L$, $T$,  $\mathcal L$ and $D$ be 
as in the Introduction, and let $W$ denote the set of G\"odel numbers of all those sentences of $L$ which are true in the interpretation of $L$.
Given a subset $U$ of  $D$, 
denote 
\begin{equation}\label{E20}
\begin{aligned}
D_1(U)=&\{\hbox{\#$T(\mathbf n)$: $\mathbf n=\left\lceil A\right\rceil$, where $A$ is a sentence of $\mathcal L$ and  \#$A$  is in $U$}\},\\
D_2(U)=&\{\hbox{\#[$\neg T(\mathbf n)$]: $\mathbf n=\left\lceil A\right\rceil$, where $A$ is a sentence of $\mathcal L$ and  \#[$\neg A$]  is in $U$}\}.
\end{aligned}
\end{equation}
We shall  construct subsets  $G_n(U)$, $n\in\mathbb N_0$, of $D$
 recursively as follows.

Define  
\begin{equation}\label{E201}
G_0(U)=\begin{cases} W \ \hbox {if $U=\emptyset$},\\
  W\cup D_1(U)\cup \{\#[\exists xT(x)],\#[\neg\forall x\neg T(x)]\} 
	\hbox{ if $\emptyset\subset U\subset D$, and $D_2(U)=\emptyset$,}\\
  W\cup D_1(U)\cup D_2(U)\cup \{\#[\exists xT(x)],\#[\neg\forall x\neg T(x)]\}\\
	\cup\{\#[\neg\forall xT(x)]\#[\exists x\neg T(x)]\} \
	\hbox{ if $\emptyset\subset U\subset D$, and $D_2(U)\ne\emptyset$}, \\               
										W\cup D_1(U)\cup D_2(U)\cup \{\#[\exists xT(x)],\#[\neg\exists xT(x)]
																				,\#[\forall xT(x)],\#[\neg\forall xT(x)]\} \\
											\cup\{\#[\forall x\neg T(x)],\#[\neg\forall x\neg T(x)],
											\#[\exists x\neg T(x)]\#[\neg\exists x\neg T(x)]\hbox{ if } U=D.
										\end{cases}
\end{equation}										

Let  $A$ and $B$ denote sentences of $\mathcal L$. When $n\in\mathbb N_0$, and $G_n(U)$ is defined, denote
\begin{equation}\label{E203}
\begin{aligned}
G_n^1(U)=&\{\#[A\vee B]:\#A \hbox{ or \#$B$ is in } G_n(U)\},\\
G_n^2(U)=&\{\#[A\wedge B]:\#A \hbox{ and \#$B$ are in } G_n(U)\},\\
G_n^3(U)=&\{\#[A\rightarrow B]:\#[\neg A] \hbox{ or \#$B$ is in } G_n(U)\},\\
G_n^4(U)=&\{\#[A\leftrightarrow B]:\hbox{both \#$A$  and \#$B$  or both \#[$\neg A$] and \#[$\neg B$] are in } G_n(U) \},\\
G_n^5(U)=&\{\#[\neg(A\vee B)]:\#[\neg A] \hbox{ and \#[$\neg B$] are in } G_n(U)\},\\
G_n^6(U)=&\{\#[\neg(A\wedge B)]:\#[\neg A] \hbox{ or \#$[\neg B]$ is in } G_n(U)\},\\
G_n^7(U)=&\{\#[\neg(A\rightarrow B)]:\#A \hbox{ and \#[$\neg B$] are in } G_n(U)\},\\
G_n^8(U)=&\{\#[\neg(A\leftrightarrow B)]:\hbox{both \#$A$ and \#[$\neg B]$ or both \#[$\neg A$] and \#$B$ are in } G_n(U) \},\\
G_n^9(U)=&\{\#[\neg(\neg A)]:\#A \hbox{ is in } G_n(U)\},
\end{aligned}
\end{equation}
and define
\begin{equation}\label{E204}
G_{n+1}(U)=G_n(U)\cup \bigcup_{k=1}^9 G_n^k(U).
\end{equation}
Because $\mathcal L$ is closed with respect to connectives $\neg$, $\vee$, $\wedge$, $\rightarrow$ and $\leftrightarrow$, it follows from the above construction that
$G_n(U)$ is defined for every $n\in\mathbb N_0$. 
Moreover, $G_n(U)\subseteq G_{n+1}(U)$ and  $G_n^k(U)\subseteq G_{n+1}^k(U)$ for all $n\in\mathbb N_0$ and $k=1,\dots,9$.
In particular, we can define
\begin{equation}\label{E21}
G(U)=\bigcup_{n=0}^\infty G_n(U).
\end{equation}
Because every set $G_n(U)$ is a subset of $D$, then also $G(U)$ is contained in $D$.


\section{Validity of rules (r1)--(r11)}\label{S3}

Now we are ready to prove our main result.

\begin{theorem}\label{T31} Let $U$ be a subset of $D$, and let the subsets $G(U)$ and $F(U)$ of $D$ be defined by \eqref{E21}, and by
\begin{equation}\label{E31}
F(U)=\{\#A: \#[\neg A]\in G(U)\}.
\end{equation}
Then rules (r1)--(r11) are valid.
\end{theorem}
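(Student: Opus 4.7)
The plan is to exploit the two-layer structure of the construction in \eqref{E201}--\eqref{E21}: the base $G_0(U)$ collects all ``atomic'' contributions ($L$-truths via $W$, $T$-predicate atoms via $D_1(U)$ and $D_2(U)$, and a family of quantified $T$-sentences that depends only on which of the four cases in \eqref{E201} $U$ falls into), while the inductive step $G_{n+1}(U)=G_n(U)\cup\bigcup_{k=1}^9 G_n^k(U)$ forces closure under the standard propositional clauses. Since $F(U)$ is defined in \eqref{E31} directly from $G(U)$, each half of each rule (r1)--(r11) reduces to deciding whether the G\"odel number of a specific sentence lies in $G(U)$. Because every sentence of $\mathcal L$ has a unique parse, this in turn reduces to tracing which single clause of \eqref{E201} or which single pattern $G_n^k$ could have contributed it.

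For rules (r3)--(r7) the argument is essentially mechanical. The $G(U)$-parts fall out of the definitions of $G_n^1,\dots,G_n^9$ by an induction on $n$, and the $F(U)$-parts follow by rewriting each condition via \eqref{E31} and invoking the corresponding pattern: for instance \#$[A\wedge B]\in F(U)$ unfolds to \#$[\neg(A\wedge B)]\in G(U)$, which by $G_n^6$ is equivalent to \#$[\neg A]$ or \#$[\neg B]$ being in $G(U)$, i.e.\ to \#$A$ or \#$B$ being in $F(U)$. Rule (r3) additionally needs \#$A\in G(U)$ iff \#$[\neg\neg A]\in G(U)$; the forward direction is $G_n^9$, while the reverse uses that \#$[\neg\neg A]$ matches no other $G_n^k$ pattern and no base-case clause beyond its possible $W$-contribution (which, for $L$-sentences, coincides with \#$A\in W$ by classical truth tables).

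For (r1) and (r2) one extra ingredient is needed in each case. Because $L$ is an MA language, $W$ itself already respects the classical truth clauses, so induction on the complexity of an $L$-sentence $A$ gives \#$A\in G(U)$ iff \#$A\in W$, regardless of which case of \eqref{E201} is active; the $F(U)$-half of (r1) then follows because $\neg A$ is again an $L$-sentence with the complementary truth value. For (r2), $T(\mathbf n)$ is atomic in $\mathcal L$, so \#$T(\mathbf n)$ can enter $G(U)$ only via $D_1(U)$, and dually \#$[\neg T(\mathbf n)]$ only via $D_2(U)$; reading off \eqref{E20} then yields exactly rule (r2).

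The hardest step concerns rules (r8)--(r11). None of the eight sentences $\exists xT(x)$, $\forall xT(x)$, $\exists x\neg T(x)$, $\forall x\neg T(x)$ and their outer negations belongs to $W$, and none matches any pattern $G_n^k$, so their membership in $G(U)$ is controlled entirely by the four-case partition in the definition of $G_0(U)$. The plan is to align each right-hand side with one of these four cases: ``$U\ne\emptyset$'' corresponds via $D_1(U)$ to ``some \#$T(\mathbf n)\in G(U)$''; ``$D_2(U)\ne\emptyset$'' corresponds via $D_2(U)$ to ``some \#$T(\mathbf n)\in F(U)$''; and the $U=D$ clause corresponds to the various universal statements, using that $X$ enumerates the numerals of all G\"odel numbers of $\mathcal L$-sentences. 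The delicate point is the universal side: one must verify that conditions such as ``every \#$T(\mathbf n)\in G(U)$'' really force $U=D$ (so that the extra quantified sentences of the fourth case are indeed available in $G_0(U)$) rather than only one of the intermediate cases. This is where the closure of $\mathcal L$ under $\neg$ and the precise boundary between the third and fourth clauses of \eqref{E201} must be exploited.
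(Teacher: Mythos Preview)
Your overall strategy coincides with the paper's: reduce each rule to a membership question in $G(U)$ via \eqref{E31}; handle $L$-sentences through $W$ and the MA-language hypothesis; handle $T(\mathbf n)$ and $\neg T(\mathbf n)$ through $D_1(U)$ and $D_2(U)$; handle the connectives through the patterns $G_n^1,\dots,G_n^9$ together with the double-negation fact $\#A\in G(U)\Leftrightarrow\#[\neg\neg A]\in G(U)$ (which the paper isolates as an auxiliary rule (r0)); and handle the eight quantified $T$-sentences entirely through the case split in \eqref{E201}. On (r1)--(r7) your sketch and the paper's argument are essentially identical.

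However, the point you flag as ``delicate'' is not merely delicate; it is a genuine gap that cannot be closed as the theorem is stated. Take $U=\{\#[\neg A]:A\ \text{a sentence of}\ \mathcal L\}$. This $U$ is nonempty, has $D_2(U)\ne\emptyset$, and is a \emph{proper} subset of $D$ (for instance $\#T(\mathbf n)\in D\setminus U$ for any $\mathbf n$), so the third clause of \eqref{E201} applies and $\#[\neg\exists xT(x)]\notin G_0(U)$; since $\neg\exists xT(x)$ matches no pattern $G_n^k$, we get $\#[\exists xT(x)]\notin F(U)$. Yet for every sentence $A$ one has $\#[\neg A]\in U$, so by (r2) $\#T(\mathbf n)\in F(U)$ for every $\mathbf n\in X$. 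Thus the $F(U)$-half of (r8) fails for this $U$, and the same example defeats the $G(U)$-half of (r11). The root cause is precisely what you anticipated: ``$\#T(\mathbf n)\in F(U)$ for all $\mathbf n\in X$'' only says that $\{\#[\neg A]:A\in\mathcal L\}\subseteq U$, which does not force $U=D$ and hence does not trigger the fourth clause of \eqref{E201}. The paper's own proof asserts the equivalence ``$U=D$ iff $\#T(\mathbf n)\in F(U)$ for every $\mathbf n\in X$'' without justification, so it shares this gap; the remaining pieces of (r8)--(r11), where the relevant side condition is ``$U\ne\emptyset$'', ``$D_2(U)\ne\emptyset$'', or ``$\#A\in U$ for all $A$'', do go through along the lines you indicate.
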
    

\begin{proof} Let $A$ be a sentence of $L$. It follows from the construction of $G(U)$ that \#$A$ is in $G(U)$ iff \#$A$ is in $W$, 
i.e., iff $A$ is true in the interpretation of $L$. Definition \eqref{E31} of $F(U)$ implies that \#$A$ is in $F(U)$ iff \#[$\neg A$]
is in $G(U)$ iff $\neg A$ is true in the interpretation of $L$. Because $L$ is an MA language, then  $\neg A$ is true in the interpretation of $L$ iff $A$ is false in the interpretation of $L$. This proves (r1).
\smallskip

Let $\mathbf n$ be a numeral. The construction of $G(U)$ implies that  \#$T(\mathbf n)$ is in $G(U)$ iff it is in $D_1(U)$, i.e., 
iff $\mathbf n=\left\lceil A\right\rceil$, where $A$ is a sentence of $\mathcal L$ and  \#$A$  is in $U$.
\#$T(\mathbf n)$ is by \eqref{E31} in $F(U)$ iff \#[$\neg T(\mathbf n)$] is in  $G(U)$ iff (by the construction of $G(U)$) 
\#[$\neg T(\mathbf n)$] is in $D_2(U)$ iff (by the definition of $D_2(U)$) $\mathbf n=\left\lceil A\right\rceil$, where $A$ is a sentence of $\mathcal L$ and  \#[$\neg A$]  is in $U$. This ends the proof of (r2).
\smallskip

In the proof of (r3) we need the following auxiliary result.
\begin{itemize}
\item[(r0)] If $A$ is a sentence of $\mathcal L$, then \#[$\neg(\neg A)$] is in $G(U)$ iff \#$A$ is in $G(U)$.
\end{itemize} 
Assume first that \#[$\neg(\neg A)$] is in $G_0(U)$. Then 
 \#[$\neg(\neg A)$] is in $W$, so that sentence $\neg(\neg A)$ is true in the interpretation of $L$. Since $L$ is an MA language, then  $A$ is true in the interpretation of $L$. Thus \#$A$ is in $W$, and hence in $G(U)$.

Assume next that the least of those $n$ for which  \#[$\neg(\neg A)$] is in $G_n(U)$ is $> 0$. The definition of $G_n(U)$ implies that if  \#[$\neg(\neg A)$] is in  $G_n(U)$, then  \#[$\neg(\neg A)$] is in  $G_{n-1}^9(U)$, so that \#$A$ is in $G_{n-1}(U)$, and hence in $G(U)$.

Thus \#$A$ is in $G(U)$ if \#[$\neg(\neg A)$] is in  $G_n(U)$ for some $n\in\mathbb N_0$, or equivalently, if \#[$\neg(\neg A)$] is in $G(U)$.

Conversely, if \#$A$ is in $G(U)$, then it is in $G_n(U)$ for some $n$, so that   \#[$\neg(\neg A)$] is in  $G_n^9(U)$, and hence
in $G_{n+1}(U)$, and thus in $G(U)$. This concludes the proof of (r0).
\smallskip

To prove (r3), let $A$ be a sentence of $\mathcal L$. It follows from \eqref{E31} that \#[$\neg A$] is in $G(U)$ iff \#$A$ is in $F(U)$.
Consider next the case when 
\#[$\neg A$] is in $F(U)$. By \eqref{E31} this holds iff  
\#[$\neg(\neg A)$] is in $G(U)$ iff (by (r0)) \#$A$ is in $G(U)$. 
 This ends the proof of (r3).
\smallskip


Let $A$ and $B$ be sentences of $\mathcal L$. If \#$A$ or \#$B$ is in $G(U)$, there is by \eqref{E21} an $n\in\mathbb N_0$ such that \#$A$ or \#$B$ is in $G_n(U)$. Thus  \#[$A\vee B$] is in $G_n^1(U)$, and hence in $G(U)$.

Conversely, assume that \#[$A\vee B$] is in $G(U)$. Then there is by \eqref{E21} an $n\in\mathbb N_0$ such that \#[$A\vee B$] is in $G_n(U)$.
Assume first that $n=0$. If \#[$A\vee B$] is in $G_0(U)$,  it is in $W$.
Thus  $A\vee B$ is true in the interpretation of $L$. Because $L$ is an MA language, then $A$ or $B$ is true in the interpretation of $L$, i.e., \#$A$ or \#$B$ is in $W$, and hence in $G(U)$.

Assume next that the least of those $n$ for which  \#[$A\vee B$] is in $G_n(U)$ is $> 0$. Then  \#[$A\vee B$] is in  $G_{n-1}^1(U)$, so that \#$A$ or \#$B$ is in $G_{n-1}(U)$, and hence in
$G(U)$.

Consequently, \#[$A\vee B$] is in $G(U)$ iff  \#$A$ or \#$B$ is in $G(U)$.

It follows from \eqref{E31} that

(a) \#[$A\vee B$] is  in $F(U)$ iff  \#[$\neg(A\vee B)$] is in $G(U)$.

If \#[$\neg(A\vee B)$] is in $G(U)$, there is by \eqref{E21} an $n\in\mathbb N_0$ such that \#[$\neg(A\vee B)$] is in $G_{n}(U)$.
Assume that $n=0$. If \#[$\neg(A\vee B)$] is in $G_0(U)$, it is in $W$.
Then  $\neg(A\vee B)$ is true in the interpretation of $L$, so that ($L$ is an MA language) $\neg A$ and $\neg B$ are true in the interpretation of $L$, i.e., \#[$\neg A$] and \#[$\neg B$] are in $W$, and hence in $G(U)$.

Assume next that the least of those $n$ for which  \#[$\neg(A\vee B)$] is in $G_n(U)$ is $> 0$. Then  \#[$\neg(A\vee B)$] is in  $G_{n-1}^5(U)$, so that \#[$\neg A$] and \#[$\neg B$] are in $G_{n-1}(U)$, and hence in
$G(U)$. Thus, \#[$\neg A$] and \#[$\neg B$] are in $G(U)$ if \#[$\neg(A\vee B)$] is in $G(U)$.

Conversely,  if \#[$\neg A$] and \#[$\neg B$] are in $G(U)$, there exist by \eqref{E21} $n_1,n_2\in\mathbb N_0$ such that \#[$\neg A$] is in $G_{n_1}(U)$ and \#[$\neg B$]  is in $G_{n_2}(U)$. Denoting $n=\max\{n_1,n_2\}$, then both \#[$\neg A$] and \#[$\neg B$] are in $G_n(U)$.
This result and the definition of $G_n^5(U)$ imply that \#[$\neg(A\vee B)$] is in $G_n^5(U)$, and hence in $G(U)$. Consequently,

(b)  \#[$\neg(A\vee B)$] is in $G(U)$ iff \#[$\neg A$] and \#[$\neg B$] are in $G(U)$ iff (by \eqref{E31}) \#$A$ and \#$B$ are in $F(U)$. 

Thus, by (a) and (b), \#[$A\vee B$] is  in $F(U)$ iff  \#$A$ and \#$B$ are in $F(U)$.
This concludes the proof of (r4).
\smallskip

The proofs for the validity of rules (r5)--(r7) are similar to that given above for rule (r4). 
\smallskip

\#[$\exists xT(x)$] is in $G(U)$ iff (by construction of $G(U)$) \#[$\exists xT(x)$] is in $G_0(U)$ iff $U$ is nonempty iff (by (r2)) 
\#$T(\mathbf n)$ is in $G(U)$ for some numeral $\mathbf n\in X$.

\#[$\exists xT(x)$] is in $F(U)$ iff (by (r3)) \#[$\neg \exists xT(x)$] is in $G(U)$ iff (by the construction of $G(U)$) \#[$\neg \exists xT(x)$] is in $G_0(U)$ iff $U=D$ iff (by (r2))  \#$T(\mathbf n)$ is in $F(U)$ for every numeral $\mathbf n\in X$. This concludes the proof of (r8). 
\smallskip

\#[$\forall xT(x)$] is in $G(U)$ iff (by construction of $G(U)$) \#[$\forall xT(x)$] is in $G_0(U)$ iff $U=D$ iff (by (r2)) \#$T(\mathbf n)$ is in $G(U)$ for every numeral $\mathbf n\in X$. 

\#[$\forall xT(x)$] is in $F(U)$ iff (by (r3)) \#[$\neg \forall xT(x)$] is in $G(U)$ iff (by the construction of $G(U)$) \#[$\neg\forall xT(x)$] is in $G_0(U)$ iff $D_2(U)$ is nonempty iff  \#$[\neg T(\mathbf n)]$ is in $G(U)$ at least for one numeral $\mathbf n\in X$ iff  \#$T(\mathbf n)$ is in $F(U)$ at least for one numeral $\mathbf n\in X$. This ends the proof of rule (r9).
\smallskip

Similar reasoning as in the above proofs of (r8) and (r9) can be used to verify that rules (r10) and (r11) are valid.   
\end{proof}


\section{Properties of $G(U)$ and $F(U)$ when $U$ is consistent}\label{S4}

In this section we shall prove some properties of $G(U)$ and $F(U)$, where $U$ is consistent. They are used in \cite{Hei14} to prove the existence of consistent fixed points of $G$.

\begin{lemma}\label{L01} Let $U$ be a consistent subset of $D$. Then $G(U)\cap F(U)=\emptyset$.
\end{lemma}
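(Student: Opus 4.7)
The plan is to show that for every sentence $A$ of $\mathcal L$ it cannot happen that both $\#A \in G(U)$ and $\#[\neg A] \in G(U)$; by the definition \eqref{E31} of $F(U)$ this is exactly the statement $G(U) \cap F(U) = \emptyset$. I would argue by induction on the $\mathcal L$-complexity of $A$, counting basic sentences of $\mathcal L$ (sentences of $L$, sentences $T(\mathbf n)$, and the four basic quantifier sentences $\exists xT(x),\forall xT(x),\exists x\neg T(x),\forall x\neg T(x)$) as having complexity zero and each occurrence of $\neg,\vee,\wedge,\rightarrow,\leftrightarrow$ in the parsing of $A$ into $\mathcal L$ as increasing complexity by one. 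The only tool used is the list of rules (r1)--(r11) already established in Theorem \ref{T31}; in particular I would not revisit the recursive construction.

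For the base cases three kinds of basic sentences arise. If $A$ is a sentence of $L$, rule (r1) together with the fact that $L$ is an MA language, hence two-valued, shows that exactly one of $\#A\in G(U)$, $\#A\in F(U)$ can hold. If $A=T(\mathbf n)$, rule (r2) forces any witness of $\#A\in G(U)$ to produce a sentence $B$ with $\mathbf n=\lceil B\rceil$ and $\#B\in U$, while any witness of $\#A\in F(U)$ produces a sentence $B'$ with $\mathbf n=\lceil B'\rceil$ and $\#[\neg B']\in U$; since $\mathbf n$ determines $B$ uniquely we have $B=B'$, so both $\#B$ and $\#[\neg B]$ lie in $U$, contradicting the consistency of $U$. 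Finally, if $A$ is one of the four basic quantifier sentences, the corresponding rule from (r8)--(r11) immediately reduces $\#A\in G(U)\cap F(U)$ to the existence of a numeral $\mathbf n\in X$ with $\#T(\mathbf n)\in G(U)\cap F(U)$, which is already excluded by the previous sub-case.

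For the inductive step $A$ has one of the forms $\neg B$, $B\vee C$, $B\wedge C$, $B\rightarrow C$, $B\leftrightarrow C$, with $B$ and $C$ of strictly smaller $\mathcal L$-complexity. Each of (r3)--(r7) gives one condition for $\#A\in G(U)$ and one for $\#A\in F(U)$, both phrased purely in terms of whether $\#B$ or $\#C$ belongs to $G(U)$ or to $F(U)$. Intersecting the $G$-condition with the $F$-condition forces in every case, by a routine pairing of disjuncts, either $\#B\in G(U)\cap F(U)$ or $\#C\in G(U)\cap F(U)$, which the induction hypothesis rules out. For example, if $A=B\rightarrow C$, (r6) requires simultaneously $\#B\in F(U)$ or $\#C\in G(U)$, and $\#B\in G(U)$ together with $\#C\in F(U)$; the two alternatives on the $G$-side combine with the uniform $F$-side to pin down $\#B$ or $\#C$ in $G(U)\cap F(U)$, contradicting the inductive hypothesis.

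The only delicate point is the $T(\mathbf n)$ base case, which is the unique place in the whole argument where the hypothesis that $U$ is consistent is actually used. No other obstacle is expected, because Theorem \ref{T31} has already packaged the full compositional behaviour of $G(U)$ and $F(U)$ into rules (r1)--(r11), so the lemma reduces to the classical observation that a consistent assignment of truth values at the basic level propagates through the standard truth-table rules without producing a contradiction.
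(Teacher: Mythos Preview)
Your argument is correct but follows a different organizational strategy from the paper. You induct on the $\mathcal L$-complexity of a sentence and treat rules (r1)--(r11) of Theorem~\ref{T31} as a black box that already encodes the full compositional behaviour of $G(U)$ and $F(U)$. The paper instead inducts on the stage $n$ of the recursive construction, establishing the hypothesis $G_n(U)\cap F(U)=\emptyset$ directly from the definition \eqref{E201} of $G_0(U)$ and the definitions of the sets $G_n^k(U)$ in \eqref{E203}, and only at the end passing to the union \eqref{E21}. Your route is more modular once Theorem~\ref{T31} is available: the inductive step becomes the routine observation that the classical truth-table clauses cannot place a compound sentence simultaneously in $G(U)$ and $F(U)$ unless some immediate subformula already lies in both. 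The paper's route stays closer to the explicit construction and handles items such as $\neg T(\mathbf n)$ and the negated quantifier sentences in its base case (since these occur in $G_0(U)$), whereas you defer them to the inductive step via (r3); both choices are legitimate. The two approaches require comparable effort, and in each the consistency of $U$ is invoked exactly at the $T(\mathbf n)$ clause.
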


\begin{proof} If $A$ is in $L$, then  by rule (r1) \#$A$ is not in $G(U)\cap F(U)$ because $L$ is fully interpreted.

Let $\mathbf n$ be a numeral. \#$T(\mathbf n)$ is in $G_0(U)$ iff it is in $D_1(U)$ iff $\mathbf n= \left\lceil A\right\rceil$, where \#$A$ is in $U$. \#$T(\mathbf n)$ is  in $F(U)$ iff  \#[$\neg T(\mathbf n)]$ is in $G(U)$, or equivalently, in $D_2(U)$ iff $\mathbf n= \left\lceil A\right\rceil$, where
\#[$\neg A$] is in $U$. Thus \#$T(\mathbf n)$
cannot be both in $G(U)$ and in $F(U)$, and hence not in  $G_0(U)\cap F(U)$, because the consistency of $U$ implies that \#$A$ and \#[$\neg A$] cannot be both in $U$.

If $U$ is empty, then none of the G\"odel numbers
\#[$\exists xT(x)$],\#[$\neg\exists xT(x)$],
\#[$\forall xT(x)$],\#[$\neg\forall xT(x)$],

\#[$\forall x\neg T(x)$],\#[$\neg\forall x\neg T(x)$], 
\#[$\exists x\neg T(x)$], and \#[$\neg\exists x\neg T(x)$] is in $G_0(U)$.
Hence they are not in $G_0(U)\cap F(U)$. 

Assume next that $U$ is not empty. 
Because $U$ is consistent, it is a proper subset of $D$. Then rules (r1), (r8)-(r11), result (r0) and definitions \eqref{E201} and \eqref{E31} imply that 
not any of the above listed  G\"odel numbers is both in $G_0(U)$ and in $F(U)$, and hence 
 in $G_0(U)\cap F(U)$.

The above results and the definition of $G_0(U)$ imply that the
 induction hypothesis:
\begin{enumerate}
\item[(h0)] $G_n(U)\cap F(U)=\emptyset$ 
\end{enumerate}
holds for $n=0$.
If \#[$A\vee B$] is in $G_n^1(U)\cap F(U)$,  then \#$A$ or \#$B$ is in $G_n(U)$,  and both \#$A$ and \#$B$ are in $F(U)$ by (r4), so that \#$A$ or 
\#$B$ is in $G_n(U)\cap F(U)$.  Hence, if (h0) holds, then $G_n^1(U)\cap F(U)=\emptyset$.  

\#[$A\wedge B$] cannot be  in $G_n^2(U)\cap F(U)$, for otherwise both \#$A$ and \#$B$ are in $G_n(U)$, and at least one of \#$A$ and \#$B$ is in $F(U)$, so that \#$A$ or \#$B$ is in $G_n(U)\cap F(U)$, contradicting with (h0). Thus $G_n^2(U)\cap F(U)=\emptyset$ if (h0) holds.

  
If \#[$A\rightarrow B$] is in  $G_n^3(U)\cap F(U)$, then \#[$\neg A$] or \#$B$  is in $G_n(U)$ and both \#[$\neg A$] and \#$B$ are in $F(U)$.
But then \#[$\neg A$] or \#$B$  is in $G_n(U)\cap F(U)$. Thus  $G_n^3(U)\cap F(U)=\emptyset$ if (h0) holds.

If \#[$A\leftrightarrow B$] is in $G_n^4(U)\cap F(U)$, then both \#$A$ and \#$B$ or both \#[$\neg A$] and \#[$\neg B$] are in $G_n(U)$,
and both \#[$A$] and \#[$\neg B$] or both \#[$\neg A$] and \#[$B$] are in $F(U)$. Then one of G\"odel numbers \#$A$, \#$B$, \#[$\neg A$] and \#[$\neg B$] is in $G_n(U)\cap F(U)$, contradicting with (h0). Consequently,  $G_n^4(U)\cap F(U)=\emptyset$ if (h0) holds.

If \#[$\neg(A\vee B)$] is in $G_n^5(U)\cap F(U)$,  then \#[$\neg A$] and \#[$\neg B$] are in $G_n(U)$, and \#[$A\vee B$] is in $G(U)$, i.e.,
\#$A$ or \#$B$ is in $G(U)$, or equivalently, \#[$\neg A$] or \#[$\neg B$] is in $F(U)$. Thus \#[$\neg A$] or \#[$\neg B$] is in $G_n(U)\cap F(U)$.  Hence, if (h0) holds, then $G_n^5(U)\cap F(U)=\emptyset$.  

If \#[$\neg(A\wedge B)$] is in $G_n^6(U)\cap F(U)$,  then \#[$\neg A$] or \#[$\neg B$] is in $G_n(U)$, and \#[$A\wedge B$] is in $G(U)$, or equivalently, \#$A$ and \#$B$ are in $G(U)$, i.e., \#[$\neg A$] and \#[$\neg B$] are in $F(U)$.  Consequently, \#[$\neg A$] or \#[$\neg B$] is in $G_n(U)\cap F(U)$, contradicting with (h0). Thus $G_n^6(U)\cap F(U)=\emptyset$
if (h0) holds.  

If \#[$\neg(A\rightarrow B)$] is in $G_n^7(U)\cap F(U)$,  then \#$A$ and \#[$\neg B$] are in $G_n(U)$, and \#[$A\rightarrow B$] is in $G(U)$, i.e.,
\#[$\neg A$] or \#$B$ is in $G(U)$, or equivalently, \#$A$ or \#[$\neg B$] is in $F(U)$. Thus \#$A$ or \#[$\neg B$] is in $G_n(U)\cap F(U)$. Hence, if (h0) holds, then $G_n^7(U)\cap F(U)=\emptyset$.  

\#[$\neg(A\leftrightarrow B)$] cannot be  in $G_n^8(U)\cap F(U)$, for otherwise both \#[$A$] and \#[$\neg B$] or both \#[$\neg A$] and \#[$B$] are in $G_n(U)$, and \#[$A\leftrightarrow B$] is in $G(U)$, i.e.,
both \#[$\neg A$] and \#[$\neg B$]  or both \#$A$ and \#$B$ are in $F(U)$. Thus one of G\"odel numbers \#$A$, \#$B$, \#[$\neg A$] and \#[$\neg B$] is in $G_n(U)\cap F(U)$, contradicting with (h0). Thus (h0) implies that $G_n^8(U)\cap F(U)=\emptyset$.

If \#[$\neg(\neg A)$] would be  in $G_n^9(U)\cap F(U)$, then \#$A$ would be in $G_n(U)$ and \#[$\neg(\neg A)$], or equivalently, by (r0), \#$A$ would be in $F(U)$, so that  \#$A$ would be in $G_n(U)\cap F(U)$. Consequently, $G_n^9(U)\cap F(U)=\emptyset$
if (h0) holds.  

Because $G_{n+1}(U)=G_n(U)\cup \bigcup_{k=1}^9 G_n^k(U)$, the above results imply that $G_{n+1}(U)\cap F(U)=\emptyset$ if (h0) holds.
Since it holds when $n=0$, the above proof shows by induction
that it holds for all $n\in\mathbb N_0$. 

If \#$A$ is in $G(U)$, it is by \eqref{E21} in $G_n(U)$ for some $n\in\mathbb N_0$. Because (h0) holds, then \#$A$ is not in $F(U)$. Consequently, $G(U)\cap F(U)=\emptyset$.
\end{proof}

\begin{lemma}\label{L201} If $U$ is a consistent subset of $D$, then both $G(U)$ and $F(U)$ are consistent.
\end{lemma}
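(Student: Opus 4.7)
The plan is to reduce both consistency claims to Lemma \ref{L01} via the definition $F(U)=\{\#A:\#[\neg A]\in G(U)\}$ and the auxiliary result (r0) from the proof of Theorem \ref{T31}, which says that $\#[\neg(\neg A)]\in G(U)$ iff $\#A\in G(U)$.

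First I would handle the consistency of $G(U)$. Suppose for contradiction that there is a sentence $A$ of $\mathcal L$ with both $\#A$ and $\#[\neg A]$ in $G(U)$. Since $\#[\neg A]\in G(U)$, the very definition \eqref{E31} of $F(U)$ gives $\#A\in F(U)$. Combined with $\#A\in G(U)$ this yields $\#A\in G(U)\cap F(U)$, contradicting Lemma \ref{L01}. Hence no such sentence $A$ exists, and $G(U)$ is consistent.

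Next I would derive the consistency of $F(U)$ from that of $G(U)$. Suppose some sentence $A$ has both $\#A\in F(U)$ and $\#[\neg A]\in F(U)$. By \eqref{E31}, $\#A\in F(U)$ means $\#[\neg A]\in G(U)$, and $\#[\neg A]\in F(U)$ means $\#[\neg(\neg A)]\in G(U)$. Applying (r0) to the latter yields $\#A\in G(U)$. Therefore $\#A$ and $\#[\neg A]$ both lie in $G(U)$, contradicting the consistency of $G(U)$ established in the previous step. This completes the argument.

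There is really no serious obstacle: the work was already done in Lemma \ref{L01} and in the proof of (r0). The only point that requires a moment's care is the double-negation step for $F(U)$, where one must remember to invoke (r0) rather than appeal directly to \eqref{E31}, since $F(U)$ is defined in terms of $G(U)$ applied to a single negation, not a double one.
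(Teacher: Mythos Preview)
Your argument is correct and matches the paper's approach: the paper proves consistency of $G(U)$ by exactly your contradiction with Lemma~\ref{L01}, and for $F(U)$ merely says the proof ``is similar.'' Your $F(U)$ argument is fine, though note that a slightly shorter route (closer to what the paper presumably intends by ``similar'') avoids (r0) entirely: from $\#A\in F(U)$ one gets $\#[\neg A]\in G(U)$, and since also $\#[\neg A]\in F(U)$ by assumption, $\#[\neg A]\in G(U)\cap F(U)$ directly contradicts Lemma~\ref{L01}.
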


\begin{proof} If $G(U)$ is not consistent, then there is such a sentence $A$ of $\mathcal L$, that \#$A$ and \#[$\neg A$] are in $G(U)$. Because \#[$\neg A$] is in $G(U)$, then \#$A$ is also in $F(U)$ by \eqref{E31}, and hence in $G(U)\cap F(U)$. But then, by Lemma \ref{L01}, $U$ is not consistent.
Consequently, if $U$ is  consistent, then $G(U)$ is consistent. The proof that $F(U)$ is consistent if $U$ is, is similar.
\end{proof}

\begin{lemma}\label{L203} Assume that $U$ and $V$ are consistent  subsets of  $D$, and that $U\subseteq V$.
 Then $G(U)\subseteq G(V)$ and $F(U)\subseteq F(V)$.  
\end{lemma}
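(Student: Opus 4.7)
The plan is to reduce everything to showing $G(U)\subseteq G(V)$, since once that is established, the definition \eqref{E31} gives immediately that any $\#A\in F(U)$ satisfies $\#[\neg A]\in G(U)\subseteq G(V)$, hence $\#A\in F(V)$. So the real content lies in the first inclusion.

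To prove $G(U)\subseteq G(V)$, I would proceed by induction on $n$ to show $G_n(U)\subseteq G_n(V)$ for every $n\in\mathbb N_0$; the desired inclusion then follows from \eqref{E21}. The base case $n=0$ needs a short case analysis based on the definition \eqref{E201} of $G_0$. The key monotonicity observation is that $U\subseteq V$ implies $D_1(U)\subseteq D_1(V)$ and $D_2(U)\subseteq D_2(V)$, directly from \eqref{E20}. Because $U$ and $V$ are consistent, the branch $U=D$ (resp.\ $V=D$) is impossible, since $D$ contains both $\#A$ and $\#[\neg A]$ for every sentence $A$. That leaves only the branches $U=\emptyset$, $\emptyset\subset U\subset D$ with $D_2(U)=\emptyset$, and $\emptyset\subset U\subset D$ with $D_2(U)\ne\emptyset$; and similarly for $V$. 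Going through each pairing and noting that $U\subseteq V$ forces $V$ into the same or a ``larger'' branch (in particular $D_2(U)\ne\emptyset$ forces $D_2(V)\ne\emptyset$), one sees in every case that the defining expression for $G_0(V)$ contains the defining expression for $G_0(U)$.

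For the inductive step, assume $G_n(U)\subseteq G_n(V)$. In view of \eqref{E204} it suffices to verify $G_n^k(U)\subseteq G_n^k(V)$ for $k=1,\dots,9$. But each $G_n^k(\cdot)$ is defined by \eqref{E203} purely in terms of membership of certain Gödel numbers in $G_n(\cdot)$, so the inclusion $G_n(U)\subseteq G_n(V)$ transports directly: for instance, if $\#A$ or $\#B$ lies in $G_n(U)$, then the same holds for $G_n(V)$, so $\#[A\vee B]\in G_n^1(V)$; and similarly for the other eight clauses. This gives $G_{n+1}(U)\subseteq G_{n+1}(V)$ and closes the induction.

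The only step that requires real attention is the base case, because of the branching in the definition \eqref{E201}; everything else is uniform monotonicity. The main obstacle is thus just bookkeeping: checking that, under the consistency hypothesis, no pair of cases for $U$ and $V$ with $U\subseteq V$ puts $V$ into a ``smaller'' branch than $U$. Consistency is crucial here precisely to rule out the $V=D$ case, which would otherwise need no discussion but whose absence streamlines the comparison of branches.
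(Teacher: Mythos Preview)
Your proposal is correct and follows essentially the same strategy as the paper: induction on $n$ to obtain $G_n(U)\subseteq G_n(V)$, with the inductive step handled uniformly via \eqref{E203}--\eqref{E204}, and $F(U)\subseteq F(V)$ read off from \eqref{E31}. The only cosmetic difference is in the base case: the paper checks each type of G\"odel number in $G_0(U)$ individually (sentences of $L$, $T(\mathbf n)$, $\neg T(\mathbf n)$, and the eight quantified formulas), whereas you compare the branches of \eqref{E201} directly; both rely on the same facts, namely the monotonicity of $D_1$ and $D_2$ and the use of consistency to exclude the $U=D$ and $V=D$ branches.
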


\begin{proof} Assume that $U$ and $V$ are consistent subsets of $D$, and that $U\subseteq V$. Let $A$ be a sentence of $L$. By rule (r1) \#$A$ is in $G(U)$ and also in $G(V)$ iff \#$A$ is in $W$.
  
Let $\mathbf n$ be a numeral. If \#$T(\mathbf n)$ is in $G_0(U)$,  then   $\mathbf n =\left\lceil A\right\rceil$, where \#$A$ is in $U$. Because $U\subseteq V$, then \#$A$ is also in $V$, whence \#$T(\mathbf n)$ is in $G_0(V)$.
If \#[$\neg T(\mathbf n)]$ is in $G_0(U)$,  then  $\mathbf n =\left\lceil A\right\rceil$, where \#[$\neg A$] is in $U$. Because $U\subseteq V$, then \#[$\neg A$] is also in $V$, whence \#[$\neg T(\mathbf n)$] is in $G_0(V)$.

If \#[$\exists xT(x)$] is  in $G_0(U)$, then $U$ is nonempty. Because $U\subseteq V$, then also $V$ is nonempty, whence \#[$\exists xT(x)]$ is in $G_0(V)$. Consequently,   
\#[$\exists xT(x)$] is in $G_0(V)$ whenever it is in $G_0(U)$. The similar reasoning shows that \#[$\neg(\forall x\neg T(x))$] is in $G_0(V)$ whenever it is in $G_0(U)$.

\#[$\exists x\neg T(x)$] is  in $G_0(U)$ iff $D_2(U)$ is nonempty, i.e., such a sentence $A$ exists in $\mathcal L$ that \#[$\neg A]$ is in $U$.
But then \#[$\neg A]$ is also in $V$, i.e.,  $D_2(V)$ is nonempty, whence \#[$\exists x\neg T(x)$] is  in $G_0(V)$.
Similarly it can be shown that \#[$\neg\forall xT(x)$] is in $G_0(V)$ whenever it in $G_0(U)$.

As consistent sets both $U$ and $V$ are proper subsets of $D$. Thus the G\"odel numbers \#[$\forall xT(x)$], \#[$\neg(\exists x\neg T(x))$,\#[$\neg\exists xT(x)$] and \#[$\forall x\neg T(x)$] are  neither in  $G_0(U)$ nor in $G_0(V)$. 

The above results imply that $G_0(U)\subseteq G_0(V)$. 
Make an induction hypothesis:
\begin{enumerate}
\item[(h1)] $G_n(U)\subseteq G_n(V)$.
\end{enumerate}
The definitions of the sets $G_n^k(U)$, $k=1,\dots,9$, given in \eqref{E203}, together with (h1), imply that
$G_n^k(U)\subseteq G_n^k(V)$ for each $k=1,\dots,9$. Thus
$$
G_{n+1}(U)=G_n(U)\cup \bigcup_{k=1}^9 G_n^k(U)\subseteq G_n(V)\cup \bigcup_{k=1}^9 G_n^k(V)=G_{n+1}(V).
$$
Because (h1) is shown to hold when $n=0$, then it holds for every $n\in\mathbb N_0$.

If \#$A$ is in $G(U)$, it is by \eqref{E21} in $G_n(U)$ for some $n$. Thus   \#$A$ is in $G_n(V)$ by (h1), and hence in $G(V)$.
Consequently, $G(U)\subseteq G(V)$. 

If \#$A$ is in $F(U)$, it follows from \eqref{E31} that \#[$\neg A$] is in $G(U)$. Because $G(U)\subseteq G(V)$, then \#[$\neg A$] is in $G(V)$
This implies  by \eqref{E31} that  \#$A$ is in $F(V)$. Thus $F(U)\subseteq F(V)$.
\end{proof}

Let $\mathcal P$ denote the family of all consistent subsets of the set $D$ of 
G\"odel numbers of sentences of $\mathcal L$. 
According to Lemma \ref{L201} the mapping $G:=U\mapsto G(U)$ maps $\mathcal P$ into $\mathcal P$.
Assuming  that $\mathcal P$ is ordered by inclusion, it follows from Lemma \ref{L203} that $G$ is order preserving.

In the formulation and the proof of Theorem \ref{T1} below  transfinite sequences of $\mathcal P$ indexed by von Neumann ordinals are used. Such a sequence $(U_\lambda)_{\lambda\in\alpha}$ of $\mathcal P$ is said to be increasing if 
$U_\mu\subseteq U_\nu$ whenever $\mu\in\nu\in\alpha$, and strictly increasing if 
$U_\mu\subset U_\nu$ whenever $\mu\in\nu\in\alpha$.

\begin{lemma}\label{L204} Assume that $(U_\lambda)_{\lambda\in\alpha}$ a strictly increasing sequence of consistent subsets of $D$.  
Then  

(a) $(G(U_\lambda))_{\lambda\in\alpha}$  is an increasing sequence of consistent subsets of $D$.

(b) The set $U_\alpha = \underset{\lambda\in\alpha}{\bigcup}G(U_\lambda)$ is consistent.
\end{lemma}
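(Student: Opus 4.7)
The plan is to deduce both parts essentially as formal consequences of Lemmas \ref{L201} and \ref{L203}, together with the fact that von Neumann ordinals are totally ordered by $\in$.

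For part (a), I would first observe that whenever $\mu\in\nu\in\alpha$ the hypothesis gives $U_\mu\subset U_\nu$ (strictness is more than needed here), and both $U_\mu$ and $U_\nu$ are consistent. Lemma \ref{L203} then yields $G(U_\mu)\subseteq G(U_\nu)$ directly, so the sequence $(G(U_\lambda))_{\lambda\in\alpha}$ is increasing. The consistency of each individual set $G(U_\lambda)$ follows at once from Lemma \ref{L201} applied to the consistent set $U_\lambda$.

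For part (b), I would argue by contradiction. Suppose $U_\alpha$ is not consistent. Then there is a sentence $A$ of $\mathcal L$ such that both \#$A$ and \#[$\neg A$] belong to $U_\alpha=\bigcup_{\lambda\in\alpha}G(U_\lambda)$. Hence there are indices $\mu,\nu\in\alpha$ with \#$A\in G(U_\mu)$ and \#[$\neg A$]$\in G(U_\nu)$. By the trichotomy property of ordinals the larger of $\mu$ and $\nu$, call it $\gamma$, satisfies $G(U_\mu)\subseteq G(U_\gamma)$ and $G(U_\nu)\subseteq G(U_\gamma)$ by part (a). Thus both \#$A$ and \#[$\neg A$] lie in the single set $G(U_\gamma)$, contradicting the consistency of $G(U_\gamma)$ already established in part (a).

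There is no substantive obstacle here: the entire argument is mechanical once Lemmas \ref{L201} and \ref{L203} are available. The only ingredient beyond those lemmas is the purely set-theoretic fact that any two indices $\mu,\nu\in\alpha$ are $\in$-comparable, which collapses the consistency check for the whole union into a consistency check at a single index. The strictness hypothesis on $(U_\lambda)_{\lambda\in\alpha}$ is in fact not used, though I would not comment on it in the final write-up since it is harmless to assume what is given.
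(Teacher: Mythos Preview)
Your proposal is correct and follows essentially the same route as the paper: part (a) is obtained from Lemmas \ref{L201} and \ref{L203} exactly as you describe, and part (b) is proved by contradiction by picking indices $\mu,\nu$ for \#$A$ and \#[$\neg A$] and using comparability to land both in a single consistent $G(U_\gamma)$. Your remark that the strictness of the sequence is not actually needed is also accurate.
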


\begin{proof} (a) Consistency of the sets $G(U_\lambda)$, $\lambda\in\alpha$, follows from Lemma \ref{L201} because the sets  $U_\lambda$, $\lambda\in\alpha$, are consistent. 

Because $U_\mu\subset U_\nu$ whenever $\mu\in\nu\in\alpha$, then $G(U_\mu)\subseteq G(U_\nu)$ whenever $\mu\in\nu\in\alpha$, by Lemma \ref{L203}, whence the sequence  $(G(U_\lambda))_{\lambda\in\alpha}$  is increasing. This proves (a).
\smallskip

(b) To prove that the set $\underset{\lambda\in\alpha}{\bigcup}G(U_\lambda)$  is consistent, assume on the contrary that there exists such a sentence $A$ in $\mathcal L$ that  both \#$A$ and \#[$\neg A$] are in $\underset{\lambda\in\alpha}{\bigcup}G(U_\lambda)$.
Thus there exist $\mu,\,\nu\in\alpha$ such that \#$A$ is in $G(U_\mu)$ and \#[$\neg A$] is in $G(U_\nu)$. Because
$G(U_\mu)\subseteq G(U_\nu)$ or $G(U_\nu)\subseteq G(U_\mu)$, then both  \#$A$ and \#[$\neg A$] are in $G(U_\mu)$ or in $G(U_\nu)$.
But this is impossible, since both $G(U_\mu)$ and $G(U_\nu)$ are consistent by (a). Thus, the set $\underset{\lambda\in\alpha}{\bigcup}G(U_\lambda)$  is consistent, so that the conclusion of (b) holds. 
\end{proof}

A subset $V$ of $D$ is called  sound if $V\subseteq G(V)$. Let $V$ be a subset of the set $W$ of G\"odel numbers of all those sentences of $L$ which are true in the interpretation of $L$. Since $L$ is an MA language, then $W$ is consistent. Thus also $V$ is consistent. Because $W=G_0(\emptyset)\subset G(\emptyset)$,
then $V\subset G(\emptyset)\subseteq G(V)$. Thus $V$ is also sound.
  
The following fixed point theorem is an application of Lemmas \ref{L201}, \ref{L203} and \ref{L204}, and is proved in \cite{Hei14}.

\begin{theorem}\label{T1} 
If $V\in \mathcal P$ is  sound, there exists the smallest of those  fixed points of $G$  which contain $V$. This fixed point is the last member of the union of those transfinite sequences 
$(U_\lambda)_{\lambda\in\alpha}$ of $\mathcal P$ which satisfy
\begin{itemize}
\item[(C)] $(U_\lambda)_{\lambda\in\alpha}$ is strictly increasing,  
$U_0=V$, and if $0\in\mu\in \alpha$, then
$U_\mu = \underset{\lambda\in\mu}{\bigcup}G(U_\lambda)$.  
\end{itemize}  
\end{theorem}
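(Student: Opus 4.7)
The plan is to carry out the transfinite recursion described by (C), show it remains inside $\mathcal{P}$, must terminate, and produces at termination the smallest fixed point of $G$ containing $V$.

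First I would inductively build the sequence with $U_0 = V$ and $U_\mu = \bigcup_{\lambda \in \mu} G(U_\lambda)$ for $\mu > 0$, and verify by transfinite induction that each $U_\lambda$ lies in $\mathcal{P}$ and satisfies the soundness condition $U_\lambda \subseteq G(U_\lambda)$. For consistency: at a successor $\mu = \nu+1$ the monotonicity from Lemma \ref{L203} collapses the defining union to $G(U_\nu)$, which is consistent by Lemma \ref{L201}; at a limit stage Lemma \ref{L204}(b) applies directly because the earlier part of the sequence is by hypothesis strictly increasing and consistent. Soundness propagates similarly: at a successor, $U_{\nu+1} = G(U_\nu) \subseteq G(G(U_\nu)) = G(U_{\nu+1})$ by monotonicity applied to the inductive hypothesis $U_\nu \subseteq G(U_\nu)$; at a limit, each $G(U_\lambda) \subseteq G(U_\mu)$ by monotonicity, so their union $U_\mu$ sits inside $G(U_\mu)$. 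In particular the sequence is (nonstrictly) increasing, and $U_{\nu+1} = G(U_\nu)$ at successors.

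Next I would argue termination. At a successor $\mu = \nu+1$, strict increase $U_\mu \supsetneq U_\nu$ is equivalent to $U_\nu$ not being a fixed point of $G$. At a limit $\mu$ one has $U_\mu \supseteq G(U_\lambda) = U_{\lambda+1} \supsetneq U_\lambda$ for each $\lambda \in \mu$, so strict increase is automatic at limits. Because each $U_\lambda \subseteq D$, the standard cardinality argument (a strictly increasing chain of subsets of $D$ has length bounded by the cardinality of the power set of $D$) forces a least successor ordinal $\alpha^* = \beta^*+1$ at which strict increase first fails, which by the previous remark means $G(U_{\beta^*}) = U_{\beta^*}$. Thus the longest sequence satisfying (C) is $(U_\lambda)_{\lambda \in \alpha^*}$, its last member $U_{\beta^*}$ is a fixed point of $G$ containing $V = U_0$, and since the recursion is deterministic, any two sequences satisfying (C) agree where both are defined, so the union of all such sequences coincides with this canonical maximal one.

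Finally I would establish minimality by one further transfinite induction: for any fixed point $W$ of $G$ with $V \subseteq W$, $U_\lambda \subseteq W$ for all $\lambda \leq \beta^*$. The base is $V = U_0 \subseteq W$; at a successor, $U_{\mu+1} = G(U_\mu) \subseteq G(W) = W$ by monotonicity together with $W = G(W)$; at a limit, each $G(U_\lambda) \subseteq W$ so the union is still contained in $W$. In particular $U_{\beta^*} \subseteq W$, establishing minimality. The main hurdle I anticipate is not the fixed-point computation itself, which is essentially forced once Lemmas \ref{L201}, \ref{L203} and \ref{L204} are in hand, but rather the bookkeeping required to reconcile the theorem's slightly unusual framing (``the last member of the union of those transfinite sequences satisfying (C)'') with a single canonical maximal sequence, together with the observation that the strict-increase clause of (C) is precisely what pins the termination ordinal to a fixed point rather than anywhere earlier.
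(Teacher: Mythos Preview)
Your proposal is correct and follows essentially the same approach the paper indicates: the transfinite iteration of $G$ starting at $V$, with consistency and monotonicity supplied by Lemmas~\ref{L201}, \ref{L203}, \ref{L204}, termination by a cardinality bound on strictly increasing chains in $\mathcal P(D)$, and minimality by a straightforward transfinite induction against an arbitrary fixed point containing $V$; the paper does not spell out the argument here but defers it to \cite{Hei14} and records the same recursion in characterization (I). One cosmetic point: avoid reusing the letter $W$ for your comparison fixed point, since in this paper $W$ already denotes the set of G\"odel numbers of true $L$-sentences.
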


The union $(U_\lambda)_{\lambda\in\gamma}$ of the transfinite sequences satisfying (C) can be characterized as follows (cf. \cite{Hei99}).
 \begin{itemize}
\item[(I)] $U_0=V$. If $\lambda$ is in $\gamma$, then $\lambda+1$ is in $\gamma$ iff $U_\lambda\subset G(U_\lambda)$, in which case $U_{\lambda+1}=G(U_\lambda)$. \\ If $\alpha$ is a limit ordinal, and $\lambda$ is in $\gamma$ for each $\lambda\in\alpha$, then $\alpha$ is in $\gamma$, and
$U_\alpha=\underset{\lambda\in\alpha}{\bigcup}U_\lambda$. 
\end{itemize}

It follows from (I) that the sequence $(U_\lambda)_{\lambda\in\gamma}$ begins with sets $U_0=V$, $U_{n+1}=G(U_n)$,
$n=0,1,\dots$, $U_\omega= \underset{n\in\omega}{\bigcup}U_n$, $U_{\omega+n+1}=G(U_{\omega+n})$, $n=0,1,\dots$, e.t.c., as long as the so defined sets exist and contain strictly previous sets. Because $(U_\lambda)_{\lambda\in\gamma}$ is a strictly increasing sequence of subsets of a countable set $D$, then $\gamma$ is a countable ordinal. In this sense the smallest fixed point of $G$ that contains $V$ is determined by a countable recursion method. By \cite[Corollary 3.1]{Hei15} this fixed point $U$ is the smallest of all fixed points of $G$ if $V$ is a subset of $W$.  Those sentences of $\mathcal L$ whose G\"odel numbers are in $U$, or equivalently, in $G(U)$, and their negations, whose G\"odel numbers belong by (r3) to $F(U)$, form an MA language $\mathcal L_0$ which 
contains its truth predicate.

\baselineskip12pt

\end{document}